\renewcommand{\tilde}[1]{\widetilde{#1}}
\newcommand{\bigO}[1]{{\mathcal{O}}\left( #1 \right)}
\begin{document}
\pagestyle{plain}
\title{A new method for constructing small-bias spaces from Hermitian
  codes}

\author{Olav Geil\inst{1}%
\and Stefano Martin\inst{1}%
    \and Ryutaroh Matsumoto\inst{1,2}}%
\authorrunning{Geil, Martin, Matsumoto}
\tocauthor{Olav Geil, Stefano Martin, Ryutaroh Matsumoto}
\institute{Department of Mathematical Sciences, 
    Aalborg University, Denmark\\
    \email{olav@math.aau.dk},\\
    \email{stefano@math.aau.dk},\\
        \and
    Department of Communications and Integrated Systems,
    Tokyo Institute of Technology, Japan\\
    \email{ryutaroh@rmatsumoto.org}}

\maketitle  

\begin{abstract}
We propose a new method for constructing small-bias spaces through  a combination of Hermitian codes. For a class of parameters our
multisets are much faster to construct than what can be achieved by use of
the traditional algebraic geometric code construction. So, if speed is
important, our construction is competitive with all other known
constructions in that region. And if speed is not a matter of
interest the small-bias
spaces of the present paper still perform better than the ones related
to norm-trace codes reported in~\cite{MP}.

\noindent \textbf{Keywords.} Small-bias space, balanced code,
Gr\"{o}bner basis,
Hermitian code.
\end{abstract}

\section{Introduction}\label{secintro}Let $\vec{X}=(X_1, \ldots, X_k)$ be a random vector that takes on
values in ${\mathbb{F}}_2^k$. As shown by Vazirani \cite{vazirani} the
variables $X_1, \ldots , X_k$ are independent and uniformly
distributed if and only if 
\begin{equation}
{\mbox{Prob}}\left(\sum_{i\in T}X_i=0\right) ={\mbox{Prob}}\left(\sum_{i\in
      T}X_i=1\right) =\frac{1}{2} \label{eqzerobal}
\end{equation}
holds for every non-empty set of indexes $T\subseteq \{1, \ldots , k
\}$. In particular, if~(\ref{eqzerobal}) is to hold for a  space ${\mathcal{X}} \subseteq {\mathbb{F}}_2^k$
then necessarily 
${\mathcal{X}}$ must be equal to ${\mathbb{F}}_2^k$. There is a need for much smaller
 spaces ${\mathcal{X}} \subseteq {\mathbb{F}}_2^k$ with 
statistical properties close to that of~(\ref{eqzerobal}). In the
following by a  space we will mean a multiset ${\mathcal{X}}$
with elements from ${\mathbb{F}}_2^k$ (this we write ${\mathcal{X}}
\subseteq {\mathbb{F}}_2^k$).  The multiset ${\mathcal{X}}$ is made into
a probability space by adjoining to each element $\vec{x} \in
{\mathcal{X}}$ the probability $p(\vec{x})=i(\vec{x})/|{\mathcal{X}}|$
where $i(\vec{x})$ denotes the number of times $\vec{x}$ appears in
${\mathcal{X}}$. As a measure for describing how close a given 
space ${\mathcal{X}}$ is to the above situation with respect to
randomization, 
Naor and Naor \cite{naor}, and Alon et.\ al.\ \cite{alon} introduced
the concept of  
$\epsilon$-biasness \cite[Def.\ 3]{naor}. (See also \cite{zucker}).
\begin{definition}\label{defepsilon}
A multiset ${\mathcal{X}} \subseteq {\mathbb{F}}_2^k$ is called an 
$\epsilon$-bias space if
\begin{equation}
\frac{1}{|{\mathcal{X}} |} \left| \sum_{\vec{x} \in {\mathcal{X}}}(-1)^{\sum_{i\in T}x_i} \right| \leq \epsilon \label{eqepsilon}
\end{equation}
holds for every non-empty index set $T\subseteq \{1, \ldots , k\}$. 
\end{definition}
Clearly, the $\epsilon$ in Definition~\ref{defepsilon} can be taken to
be a 
number between $0$ and $1$. Good randomization properties are
achieved when $\epsilon$ is close to $0$ as~(\ref{eqepsilon})
becomes~(\ref{eqzerobal}) when $\epsilon=0$. Multisets with $\epsilon$ small are called
small-bias spaces. They are useful as sample spaces in applications such as automated
theorem proving, derandomization of algorithms, program verification, and testing of
combinatorial circuits. Rather than saying that a multiset is an $\epsilon$-bias space we will
often just say that it is $\epsilon$-biased. Another name for
$\epsilon$-bias space is 
$\epsilon$-bias set \cite[Def.\ 1]{hermitian} and \cite[Def.\
1.1]{MP}. This notion may be a little misleading as the item under
consideration is actually a multiset. \\
One way of constructing small-bias spaces is through the use of
error-correcting codes.
\begin{definition}\label{defepsiloncode}
A binary $[n,k]$ code is said to be $\epsilon$-balanced if every non-zero
code word $\vec{c}$ satisfies $$ \frac{1-\epsilon}{2} \leq \frac{w_H(\vec{c})}{n}
\leq \frac{1+\epsilon}{2}.$$
Here $[n,k]$ means that the code is linear, of dimension $k$ and
length $n$. Further, $w_H$ denotes the Hamming weight.
\end{definition}
There is a simple direct translation \cite{alon}
 between the concepts described in
Definition~\ref{defepsilon} and Definition~\ref{defepsiloncode}: 
\begin{theorem}\label{thedirect}
Let $G$ be a generator matrix for an $\epsilon$-balanced binary
$[n,k]$ code. The columns of $G$ constitute an $\epsilon$-bias space
${\mathcal{X}} \subseteq {\mathbb{F}}_2^k$ of size 
$n$. Similarly, using the elements of an $\epsilon$-bias space ${\mathcal{X}}$ as
columns of a generator matrix an $\epsilon$-balanced code is derived.
\end{theorem}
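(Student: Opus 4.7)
The plan is to translate the Walsh-type sum in~(\ref{eqepsilon}) into the Hamming weight of a suitable codeword, so that Definition~\ref{defepsilon} and Definition~\ref{defepsiloncode} turn into one another verbatim.

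First, given a $k\times n$ generator matrix $G$ with columns $\vec{g}_1,\ldots,\vec{g}_n$, I regard $\mathcal{X}=\{\vec{g}_1,\ldots,\vec{g}_n\}$ as the multiset of size $n$ in $\mathbb{F}_2^k$ asserted by the theorem. For a non-empty $T\subseteq\{1,\ldots,k\}$, let $\vec{c}_T$ be the codeword obtained by summing, over $\mathbb{F}_2$, those rows of $G$ indexed by $T$. Its $j$-th entry equals $\sum_{i\in T}(\vec{g}_j)_i$, which is exactly the exponent of $-1$ appearing in~(\ref{eqepsilon}). Because the rows of a generator matrix are linearly independent, $\vec{c}_T\neq\vec{0}$ whenever $T$ is non-empty, and as $T$ ranges over the non-empty subsets of $\{1,\ldots,k\}$ the vector $\vec{c}_T$ ranges over \emph{all} non-zero codewords.

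The central identity is then the elementary
$$\sum_{\vec{x}\in\mathcal{X}}(-1)^{\sum_{i\in T}x_i}=\sum_{j=1}^{n}(-1)^{(\vec{c}_T)_j}=n-2\,w_H(\vec{c}_T),$$
so that dividing by $n$ gives $\left|\frac{1}{n}\sum_{\vec{x}\in\mathcal{X}}(-1)^{\sum_{i\in T} x_i}\right|=\left|1-2\,w_H(\vec{c}_T)/n\right|$. Comparing with the two definitions, condition~(\ref{eqepsilon}) holding for every non-empty $T$ is \emph{exactly} the $\epsilon$-balanced condition of Definition~\ref{defepsiloncode} applied to every non-zero codeword, which settles the forward direction.

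For the converse, I start with an $\epsilon$-bias multiset $\mathcal{X}$ of size $n$ and place its elements as columns of a $k\times n$ matrix $G$. The only subtle point is checking that $G$ actually has rank $k$, so that it is a legitimate generator matrix: if some non-zero $\vec{t}\in\mathbb{F}_2^k$ with support $T$ satisfied $\vec{t}\cdot\vec{x}=0$ for every $\vec{x}\in\mathcal{X}$, then the sum in~(\ref{eqepsilon}) for this $T$ would equal $1$, contradicting $\epsilon<1$ (which, as noted just after Definition~\ref{defepsilon}, we may assume). With full row rank in hand, the identity above runs in reverse and transports the $\epsilon$-bias hypothesis to the $\epsilon$-balanced property, completing the proof. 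The whole argument is essentially bookkeeping, and the only place requiring a moment of thought is this rank verification in the converse direction.
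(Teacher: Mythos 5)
Your proof is correct and complete. Note that the paper itself does not supply a proof of Theorem~\ref{thedirect} — it attributes the translation to Alon et al.\ \cite{alon} and records the statement without argument — so there is no paper proof to compare against; your argument is the standard one underlying that citation. The central identity $\sum_{\vec{x}\in\mathcal{X}}(-1)^{\sum_{i\in T}x_i}=n-2w_H(\vec{c}_T)$, the observation that nonzero codewords are in bijection with nonempty $T$ because the rows of $G$ are independent, and the unwinding of $\left|1-2w_H(\vec{c}_T)/n\right|\leq\epsilon$ into $\tfrac{1-\epsilon}{2}\leq w_H(\vec{c}_T)/n\leq\tfrac{1+\epsilon}{2}$ are all exactly what is needed. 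Your attention to the converse direction's rank verification is a genuine point that is easy to gloss over: if some nonzero $\vec{t}$ with support $T$ annihilated every column, the $T$-sum in~(\ref{eqepsilon}) would equal $n$, forcing $\epsilon\geq 1$, so for $\epsilon<1$ the matrix $G$ must have full row rank and is indeed a legitimate generator matrix. One very small caveat: the paper only says $\epsilon$ ``can be taken to be a number between $0$ and $1$,'' not that $\epsilon<1$ strictly; if $\epsilon=1$ is allowed, then the multiset is trivially $1$-biased, the rank can drop, and the converse statement should then be read as producing a (possibly degenerate) generator matrix. This is a vacuous edge case and does not affect the substance of your argument.
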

The following example illustrates the above theorem. It also shows
why it is important in Definition~\ref{defepsilon} to work with multisets rather than sets. 
\begin{example}
Consider the matrix
$$G=\left[ \begin{array}{cccccccccccc}
0&1&0&1&0&1&0&1&0&0&0&0\\
0&0&1&1&0&0&1&1&0&0&0&0\\
1&1&1&1&1&1&1&1&0&0&0&0
\end{array}\right] .
$$
The code having $G$ as a generator matrix is $\epsilon$-balanced with
$\epsilon= 1/3$ and indeed the multiset made from the columns of $G$
is $\epsilon=1/3$ biased. Treating the columns as a set (rather than a
multiset) we derive
$${\mathcal{X}}^\prime=\{(0,0,1),(1,0,1),(0,1,1), (1,1,1), (0,0,0)\}.$$
The smallest value of $\epsilon$ for which ${\mathcal{X}}^\prime$ is
$\epsilon$-biased is $\epsilon=3/5$.
\end{example}
A standard construction from~\cite{alon} tells us how to make
small-balanced codes (meaning $\epsilon$-biased codes with $\epsilon$
small): 
\begin{theorem}\label{theconc}
Let $q=2^s$ for some integer $s$ and consider a $q$-ary $[N,K,D]$ code
$C$. Let $C_s$ be the (binary) $[2^s,s]_2$ Walsh-Hadamard code, $s\geq
1$. The
concatenated code derived by using $C$ as outer code and $C_s$ as
inner code is an $\epsilon=(N-D)/N$-balanced binary code of length $n=N2^s$ and
dimension $k=Ks$.   
\end{theorem}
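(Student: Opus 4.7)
My plan is to verify the three claimed parameters separately: length, dimension, and balance. The length $n = N 2^s$ is immediate from the concatenation, since each of the $N$ outer symbols in $\mathbb{F}_q$ expands, after a fixed $\mathbb{F}_2$-linear identification $\mathbb{F}_q \cong \mathbb{F}_2^s$, to an inner codeword of length $2^s$. Under this identification the outer encoder is $\mathbb{F}_2$-linear of rank $Ks$, the inner encoder is $\mathbb{F}_2$-linear and injective on each block (for $s \geq 1$), and the composition therefore yields a binary code of dimension exactly $Ks$.

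For the balance condition, the central ingredient is the classical fact that $C_s$ is a constant-weight code on its nonzero codewords: every nonzero codeword of the $[2^s, s]_2$ Walsh--Hadamard code has Hamming weight exactly $2^{s-1}$, since the support of a nonzero $\mathbb{F}_2$-linear form on $\mathbb{F}_2^s$ is the complement of a hyperplane. Taking this as given, I would compute the Hamming weight of an arbitrary nonzero concatenated codeword.

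So let $\vec{c} = (c_1, \dots, c_N) \in C$ be a nonzero outer codeword of Hamming weight $w$. Applying the inner encoder block-by-block, each nonzero $c_i$ contributes $2^{s-1}$ ones and each zero $c_i$ contributes none, so the concatenated word has binary weight $w \cdot 2^{s-1}$ and relative weight $w/(2N)$. Injectivity of the inner encoder ensures that every nonzero concatenated codeword arises in this way, so $D \leq w \leq N$, and the relative weight therefore lies in $[D/(2N),\, 1/2]$. Setting $\epsilon = (N-D)/N$ yields $(1-\epsilon)/2 = D/(2N)$ and $(1+\epsilon)/2 \geq 1/2$, so Definition~\ref{defepsiloncode} is satisfied.

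No step looks genuinely subtle; the only non-trivial input is the constant-weight property of $C_s$, which I would cite rather than reprove. The main obstacle, if anywhere, is simply bookkeeping around the identification $\mathbb{F}_q \cong \mathbb{F}_2^s$ so that the dimension count and the block-by-block weight computation both go through cleanly.
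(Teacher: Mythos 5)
Your proof is correct and uses exactly the same central idea as the paper, namely the constant-weight property of the Walsh--Hadamard inner code (every nonzero codeword has weight exactly $2^{s-1}$); the paper's proof is just a one-line statement of this fact, and you supply the routine bookkeeping (length, dimension, and the weight computation $w \cdot 2^{s-1}$ with $D \leq w \leq N$) that it leaves implicit.
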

\begin{proof}
The result relies on the fact that every non-zero codeword of $C_s$
contains exactly as many $0$s as $1$s.
\end{proof}
The literature contains various examples of small-bias spaces that
cannot all be compared to each other. We refer to~\cite[Sec.\ 1]{hermitian} for
more details. In the following we will concentrate on 
important families of multisets for which comparison can be made. 
We remind the reader of how bigO notation works when given functions
of multiple variables. In our situation we have real valued positive functions $f_i(x,y),
i=1,2$ where $x$ can take on any value in ${\mathbb{Z}}^+$ but for
every fixed choice of $x$ the variable $y$ can
only take on values in an interval $I(x)\subseteq {\mathbb{R}}^+$. By
$f_1(x,y)=\bigO{f_2(x,y)}$ we mean that a witness $(C,\kappa)$ exists
such that for all $x$ with  $\kappa < x$ and all $y \in I(x)$ it holds
that $f_1(x,y) \leq C
f_2(x,y)$. We are interested in upper bounding the size of
${\mathcal{X}}$ which will be done in terms of bigO estimates as
above. At the same time we are interested in lower bounding the length
of the words in the multiset ${\mathcal{X}}$. Such estimates are
described using bigOmega notation. We remind the reader that by
definition $f(x)=\Omega (g(x))$ if and only if $g(x)=\bigO{f(x)}$. As we are only interested in
bigOmega estimates the meaning of $k$ changes accordingly. We have the
following results:
\begin{itemize}
\item Using Reed-Solomon codes as outer codes in Theorem~\ref{theconc}
  one achieves \cite{alon,hermitian} for all possible choices of
  $\epsilon$ and $k$
  $$\mathcal{X} \subseteq  {\mathbb{F}}_2^{\Omega(k)}, {\mbox{\ \ \ }}
  | {\mathcal{X}}|=
\bigO{\frac{k^2}{\epsilon^2\log^2(k/\epsilon)}}.$$ This is
  called the RS-bound.
\item Let $P_1, \ldots , P_{{\mathcal{N}}-1},Q$ be rational places of an algebraic
  function field over ${\mathbb{F}}_q$ and denote by $g$ the
  genus. Assume ${\mathcal{N}}=(\sqrt{q}-1)g$. That is, we assume that
  the function
  field 
  attains the Drinfeld-Vladut bound. Using codes
  $C_{\mathcal{L}}(U=P_1+\cdots+P_{{\mathcal{N}}-1},mQ)$ with $g<m$ as outer codes one gets for all
  $\epsilon$ and $k$  (see Section~\ref{sechermitian} for a discussion) 
$$\mathcal{X} \subseteq  {\mathbb{F}}_2^{\Omega(k)}, {\mbox{\ \ \ }}
  | {\mathcal{X}}|=
\bigO{\frac{k}{\epsilon^3\log(1/\epsilon)}}.$$ This result which
is in the folklore is known as the
AG-bound.
\item Using Hermitian codes with $m<g$ as outer codes one
  achieves \cite{hermitian} for $\epsilon \geq k^{-\frac{1}{2}}$ 
\begin{equation}
\mathcal{X} \subseteq  {\mathbb{F}}_2^{\Omega(k)}, {\mbox{\ \ \ }}
  | {\mathcal{X}}|=
\bigO{\left(
      \frac{k}{\epsilon^2\log(1/\epsilon)}\right)^{\frac{5}{4}}}.\label{eqBT}
\end{equation}
 This we
  call the BT-bound after the authors of~\cite{hermitian}, Ben-Aroya and Ta-Shma.
\item Using in larger generality Norm-Trace codes of low dimension as outer codes one
  achieves \cite{MP} for $l=4, 5, \ldots$ and  $\epsilon \geq
  k^{-\frac{1}{\sqrt{l}}}$ (see Section~\ref{normtrace})
$$\mathcal{X} \subseteq  {\mathbb{F}}_2^{\Omega(k)}, {\mbox{\ \ \ }}
  | {\mathcal{X}}|=
\bigO{\left(\frac{k}{\epsilon^{l-\sqrt{l}}\log(1/\epsilon)}\right)^{\frac{l+1}{l}}}.$$
Here, $l=4$ corresponds to the Hermitian case described in~\cite{hermitian}.
\item The Gilbert-Varshamov bound also applies to the small-bias
  spaces (as usual in a non-constructive way). It is derived by plugging into the
  Gilbert-Varshamov bound for binary codes $d=n/2$ and to make a Taylor
  approximation on the resulting formula. The construction uses Theorem~\ref{thedirect}
  directly. It guarantees for all $\epsilon$ and $k$ the existence of
  multisets with $$\mathcal{X} \subseteq  {\mathbb{F}}_2^{\Omega(k)}, {\mbox{\ \ \ }}
  | {\mathcal{X}}|=\bigO{\frac{k}{\epsilon^2}}.$$
\item The linear programming bound tells us that we cannot hope to
  produce $\epsilon$-bias spaces with $$\mathcal{X} \subseteq  {\mathbb{F}}_2^{\Omega(k)}, {\mbox{\ \ \ }}
  | {\mathcal{X}}|=
\bigO{
    \frac{k}{\epsilon^2\log(1/\epsilon)}} .$$
\end{itemize}
One way of comparing the above results is to choose
$\epsilon=k^{-\alpha}$, $\alpha \in {\mathbb{R}}^+$ and then to take the logarithm with base $k$. The
bigO notation suggests that we then let $k$ go to infinity. The origin
of this 
point of view is~\cite[Sec.\ 1]{hermitian}. When making the above operation
we must be careful to specify which choices of $\alpha$ are
allowed. We remind the reader of the little-o notation. Given
functions $f_i(x):{\mathbb{Z}}^+ \rightarrow {\mathbb{R}}^+, i=1,2$ by 
$f_1(x)=o(f_2(x))$ we mean that for every choice of $c \in {\mathbb{R}}^+$
there exists a $\kappa(c) \in {\mathbb{Z}}^+$ such that when
$\kappa(c) < x$ then necessarily $f_1(x)\leq c f_2(x)$.
We have:
\begin{itemize}
\item RS-bound: The family of concatenated codes from Theorem~\ref{theconc} with
  Reed-Solomon codes as outer codes gives
$$\log_k(| {\mathcal{X}}|)=2+2\alpha+ o(1)$$ for all choices of $\alpha
\in {\mathbb{R}}^+$.
\item AG-bound: The family of concatenated codes from Theorem~\ref{theconc} with
  algebraic geometric codes as outer codes and $g < m$ 
gives  $$\log_k(| {\mathcal{X}}|)=1+3\alpha+o(1)$$ for all choices of $\alpha
\in {\mathbb{R}}^+$.
\item BT-bound: The family of concatenated codes from Theorem~\ref{theconc} with
  Hermitian codes as outer codes and $m < g$ 
gives 
$$\log_k(| {\mathcal{X}}|)=\frac{5}{4}+\frac{5}{2} \alpha+o(1)$$ for all choices of $\alpha \in ]1/2,\infty[$.
\item The family of concatenated codes from norm-trace codes of low
  dimension gives  
$$\log_k(| {\mathcal{X}}|)=\frac{l+1}{l}(1+\alpha(l-\sqrt{l}))+o(1)$$
for $l=4, 5,\ldots$, and for  all $\alpha \in
[1/\sqrt{l},\infty[$ (see Section~\ref{normtrace}).
\item The Gilbert-Varshamov bound and the Linear Programming bound in
  combination tell us that we can achieve
  $$\log_k(| {\mathcal{X}}|)=1+2\alpha+o(1)$$
for all choices of $\alpha \in {\mathbb{R}}^+$  
but no better than this.
\end{itemize}
In the present paper we shall introduce a new family of small-bias spaces
using a combination of Hermitian codes as outer code. This family 
gives  
$$\log_k(| {\mathcal{X}}|)=\frac{4}{3}+\frac{8}{3}
\alpha  + o(1)$$ for all choices of $\alpha
\in {\mathbb{R}}^+ $.
We
allow $2g<m$ and it is therefore surprising that for $\alpha \in
]1,\infty[$ the achievements are better than those of the 
 Hermitian
codes with $g<m$. Our small-bias spaces perform  better
than the ones derived from norm-trace codes for all $l\geq 5$ (see
Section~\ref{normtrace} for the proof). For
$\alpha < 1$ they behave better than what can be achieved using
Reed-Solomon codes as outer code. For $\alpha < 1$ admittedly the new
$\epsilon$-bias spaces perform worse than the spaces coming from the
AG construction. This, however,  is only part of the picture. It turns
out that to construct the spaces with $\alpha < 1/2$ from the AG
construction requires quite a number of operations. In contrast, our construction
is considerable faster. We shall revert to this issue in Section~\ref{secfast}. 
Before dealing with the new
construction we will investigate how to ensure
$\epsilon=k^{-\alpha}$ in the case of the AG bound. It turns out that
for $\alpha< 1/2$ the situation is rather complicated. We include the
description here, as to our best knowledge, the details cannot be found in
the literature.
\section{The AG-bound}\label{sechermitian}
Let $q$ be a power of $2$ and
consider an algebraic function field over ${\mathbb{F}}_{q^2}$ of genus $g$ with
at least ${\mathcal{N}}=(q-1)g$ rational places. That is, the function field attains the
Drinfeld-Vladut bound. 
As noted in the introduction 
Theorem~\ref{theconc} equipped with a one-point algebraic geometric
code from the above function field produces $\epsilon$-bias
spaces ${\mathcal{X}}\subseteq {\mathbb{F}}_2^{\Omega (k)}$ with
\begin{equation}
| {\mathcal{X}}| = \bigO{\frac{k}{\epsilon^3\log_2(\frac{1}{\epsilon})}}.\label{eqeq}
\end{equation}
In the following we investigate how to achieve corresponding values
$\epsilon$ and $k$ under the requirement $\epsilon=k^{-\alpha}$,
$\alpha > 0$, and
$k\rightarrow \infty$. Observe, that in this situation for any fixed
$\alpha$ we have $\epsilon \rightarrow 0$. For
completeness we start by proving~(\ref{eqeq}) in this setting.\\
Consider rational places $P_1, \ldots , P_{{\mathcal{N}}-1},Q$ and let $U=P_1+\cdots
+P_{{\mathcal{N}}-1}$ and $G=(ag)Q$ with $a \geq 1$. The code $C_{\mathcal{L}}(U,G)$
has parameters $N=(q-1)g-1$, $K\geq \deg G-g=(a-1)g$, and $D\geq N-\deg
G=((q-1)-a)g-1$. As we are interested in asymptotics we shall assume
$N=(q-1)g$ and $D \geq ((q-1)-a)g$. From Theorem~\ref{theconc}
we get $\epsilon$-bias spaces with $\epsilon=a/(q-1)$,
${\mathcal{X}}\subseteq {\mathbb{F}}_2^{\Omega(k)}$. Here,
$k=2\log_2(q)(a-1)g$ and we have $|{\mathcal{X}}|=q^2N=(q^3-q^2)g$. As $a$ is
bounded below by $1$ and $\epsilon \rightarrow 0$ we need $q
\rightarrow \infty$ when $k\rightarrow \infty$. So the task basically
boils down to establishing a sequence of function fields over increasingly large fields and a corresponding function $a(q)$ such that
\begin{equation}
| {\mathcal{X}}|=\bigO{\frac{2 \log_2(q)(a-1)g}{\left(\frac{a}{q-1}\right)^3\log_2\left(\frac{q-1}{a}\right)}}.\label{equationen}
\end{equation}
Note that the argument on the right side is a function in the single
variable $q$ as by construction now $g$ is a function of $q$. We have 
$$\frac{2\log_2(q)(a-1)g}{\left(\frac{a}{q-1}\right)^3\log_2\left(
    \frac{q-1}{a}\right)} \geq \frac{1}{2}
\frac{\log_2(q)(a-1)}{a^3(\log_2(q-1)-\log_2(a))} | {\mathcal{X}}|$$
as $(q-1)^3\geq \frac{1}{4}(q^3-q^2)$ holds for $q\geq 2$. In
conclusion~(\ref{equationen}) holds if $a(q)=\bigO{1}$.\\
We first assume that the sequence of function fields are the
Hermitians which are function fields with $g=q(q-1)/2$. Here, actually the number of rational
places is $2qg+q^2+1$ but we shall only use $(q-1)g$ of them. Let
$a=1+q^{-c}$ where $0\leq c <2$. Clearly, $a(q)=\bigO{1}$ as
requested. We have 
$k=2\log_2(q)q^{-c}g=q^{2-c}q^\beta$
where $\beta(q) \rightarrow 0$ for $q\rightarrow \infty$. Hence,
asymptotically $\epsilon=k^{-\alpha}$ with $\alpha=1/(2-c)$. In other
words the situation is clear for $\alpha \in [\frac{1}{2},\infty[$.\\
To achieve $\alpha \in ]0,\frac{1}{2}[$ is more difficult. The problem
is to keep $a(q)=\bigO{1}$ at the same time as having
$\epsilon=k^{-\alpha}$. For this purpose we consider families of
towers of function fields over ${\mathbb{F}}_{q^2}$ attaining the
Drinfeld-Vladut bound \cite{garciastichtenoth}. We will need one tower
for each value of $q$. Note that in such a
tower for arbitrary $v \geq 2$ we can find a function field with $g \geq
q^v$. Say $g=q^{v+d(q)}$, where $d(q) \geq 0$ holds.
Let $a(q)=1+q^{-d(q)}$ then clearly $a(q)=\bigO{1}$ holds. We have 
$k=2\log_2(q)(a-1)g=q^{v+\beta}$
where $\beta(q)\rightarrow 0$ for $q \rightarrow \infty$. Also
$\epsilon=q^{-1+\gamma}$ where $\gamma(q)\rightarrow 0$ for $q
\rightarrow \infty$.  Hence,
$k^{-\alpha}=\epsilon$ asymptotically means $v\alpha=1 \Rightarrow
\alpha=1/v$. As we only assumed $v \geq 2$ we have established that all
$\alpha \in ]0,\frac{1}{2}[$ can be attained. \\
For our purpose the best candidate for a family of
good towers of function fields is the second construction by Garcia
and Stichtenoth~\cite{garciastichtenoth}. In~\cite{gsbasis} it was
shown how to construct $C_{\mathcal{L}}(U,G)$ codes from this tower
using
\begin{equation}
\bigO{(N\log_q(N))^3}\label{eqfollows}
\end{equation}
operations over ${\mathbb{F}}_{q^2}$. Although we might only need codes of small dimension the
method as stated requests us to find  bases for all one-point codes. 
As shall be demonstrated in Section~\ref{secfast} 
the small-bias spaces of the present paper can be constructed much
faster than what~(\ref{eqfollows}) guarantees for the AG construction. 
\section{The new small-bias spaces}\label{secnew}
In the present paper we propose a new choice of outer codes in the
construction of Theorem~\ref{theconc}. As already mentioned this
results in small-bias spaces with good properties. The new choice of outer codes
is derived by combining two Hermitian codes as described below. 
The easiest way to explain the combination is by using
the language
of affine variety codes~\cite{lax} and we therefore
start our investigations with a presentation of Hermitian codes as
such.
\begin{definition}
Given a monomial ordering $\prec$ and an ideal $I \subseteq
{\mathbb{F}}[X_1, \ldots , X_m]$ (here ${\mathbb{F}}$ is any field)
the footprint is
\begin{eqnarray}
\Delta_\prec (I)&:=&\{X_1^{\alpha_1}\cdots X_m^{\alpha_m} \mid
X_1^{\alpha_1}\cdots X_m^{\alpha_m} {\mbox{ \ is not a leading
    monomial}} \nonumber \\ 
&&{\mbox{ \hspace{6cm} of any polynomial in \
  }}I\}.\nonumber 
\end{eqnarray}
\end{definition}
We have the following two useful results \cite[Pro.\ 4 and Pro.\ 8,
Sec.\ 5.3]{clo}.
\begin{theorem}\label{thebasis}
The set $\{M+I \mid M \in \Delta_\prec(I)\}$ is a basis for
${\mathbb{F}}[X_1, \ldots , X_m]/I$ as a vector space over ${\mathbb{F}}$.
\end{theorem}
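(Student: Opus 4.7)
The plan is to establish the two defining properties of a basis, spanning and linear independence, by invoking the existence of a Gröbner basis for $I$ with respect to $\prec$ and the associated multivariate division algorithm. Fix a Gröbner basis $\{g_1, \ldots, g_s\}$ of $I$; by definition, the leading monomials of nonzero elements of $I$ are precisely the monomial multiples of the $\mathrm{LM}(g_i)$, so a monomial $M$ lies in $\Delta_\prec(I)$ if and only if $M$ is not divisible by any $\mathrm{LM}(g_i)$.

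To show spanning, I would take an arbitrary coset $f+I \in {\mathbb{F}}[X_1,\ldots,X_m]/I$ and divide $f$ by $(g_1,\ldots,g_s)$ using the multivariate division algorithm. The algorithm returns an expression $f = \sum h_i g_i + r$ in which no monomial appearing in $r$ is divisible by any $\mathrm{LM}(g_i)$. By the characterization above, every monomial of $r$ belongs to $\Delta_\prec(I)$, and since $f-r \in I$ we conclude $f+I = r+I$ is an ${\mathbb{F}}$-linear combination of cosets of elements of $\Delta_\prec(I)$.

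For linear independence, I would argue by contradiction. Suppose there is a finite subset $S \subseteq \Delta_\prec(I)$ and scalars $c_M \in {\mathbb{F}}$, not all zero, such that $\sum_{M \in S} c_M (M+I) = 0+I$. Then $p := \sum_{M\in S} c_M M$ is a nonzero element of $I$, so its leading monomial $\mathrm{LM}(p)$ is the leading monomial of some polynomial in $I$, and hence lies outside $\Delta_\prec(I)$. But $\mathrm{LM}(p)$ is one of the monomials $M \in S$ with $c_M \ne 0$, which by hypothesis lies in $\Delta_\prec(I)$, a contradiction. Hence all $c_M$ vanish.

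There is no genuine obstacle here beyond correctly invoking the definition of a Gröbner basis and the termination and remainder properties of the multivariate division algorithm; both are the standard prerequisites cited from~\cite{clo}. The argument is entirely formal once the equivalence between the footprint $\Delta_\prec(I)$ and the complement of the monomial ideal generated by $\{\mathrm{LM}(g_1),\ldots,\mathrm{LM}(g_s)\}$ is in hand.
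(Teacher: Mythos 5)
Your proof is correct. The paper does not prove this statement itself---it cites it as~\cite[Prop.~4, Sec.~5.3]{clo}---and your argument is precisely the standard one given there: spanning via the multivariate division algorithm applied with a Gr\"{o}bner basis (whose remainders are supported on $\Delta_\prec(I)$), and linear independence by observing that a nontrivial dependence would produce a nonzero element of $I$ whose leading monomial lies in $\Delta_\prec(I)$, contradicting the definition of the footprint.
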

As a corollary one gets the following result often referred to as the
footprint bound \cite{footprint,blahut}.
\begin{theorem}\label{thefootprint}
Assume $I$ is zero-dimensional (meaning that $\Delta_\prec(I)$ is
finite). The variety ${\mathbb{V}}_{\bar{{\mathbb{F}}}}(I)$
satisfies $| {\mathbb{V}}_{\bar{{\mathbb{F}}}}(I)| \leq |
\Delta_\prec(I)|.$
\end{theorem}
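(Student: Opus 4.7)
The plan is to derive the bound from Theorem~\ref{thebasis} by means of an evaluation map at the points of the variety. First I would pass to the algebraic closure: let $\bar{I}$ denote the ideal generated by $I$ in $\bar{\mathbb{F}}[X_1,\ldots,X_m]$. Since the $S$-polynomial criterion is insensitive to scalar extension, any Gr\"obner basis of $I$ with respect to $\prec$ remains a Gr\"obner basis of $\bar{I}$, so $\Delta_\prec(\bar{I})=\Delta_\prec(I)$. Applying Theorem~\ref{thebasis} over $\bar{\mathbb{F}}$, the quotient $\bar{\mathbb{F}}[X_1,\ldots,X_m]/\bar{I}$ is a $\bar{\mathbb{F}}$-vector space of dimension $|\Delta_\prec(I)|$, and in particular is finite-dimensional, from which it follows by elementary commutative algebra that $\mathbb{V}_{\bar{\mathbb{F}}}(I)$ is a finite set.

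Next I would introduce the evaluation map
\begin{equation*}
\mathrm{ev}\colon \bar{\mathbb{F}}[X_1,\ldots,X_m]/\bar{I} \longrightarrow \bar{\mathbb{F}}^{|\mathbb{V}_{\bar{\mathbb{F}}}(I)|},\qquad [f]\longmapsto \bigl(f(P)\bigr)_{P\in \mathbb{V}_{\bar{\mathbb{F}}}(I)}.
\end{equation*}
This is well-defined because every element of $\bar{I}$ vanishes on $\mathbb{V}_{\bar{\mathbb{F}}}(I)$, and it is clearly $\bar{\mathbb{F}}$-linear. The core of the argument is to show that $\mathrm{ev}$ is surjective. For this it suffices to hit every standard basis vector of the codomain, i.e., to exhibit for each $P\in \mathbb{V}_{\bar{\mathbb{F}}}(I)$ a polynomial $f_P$ with $f_P(P)\neq 0$ and $f_P(Q)=0$ for every other $Q\in \mathbb{V}_{\bar{\mathbb{F}}}(I)$. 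Since the variety is finite and distinct points differ in some coordinate, I would construct $f_P$ in the Lagrange style as a product, over the other points $Q$, of a linear factor $(X_j-Q_j)$ where $j$ is an index with $P_j\neq Q_j$.

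Once surjectivity is in hand, the conclusion is immediate by comparing dimensions:
\begin{equation*}
|\mathbb{V}_{\bar{\mathbb{F}}}(I)| \;=\; \dim_{\bar{\mathbb{F}}} \bar{\mathbb{F}}^{|\mathbb{V}_{\bar{\mathbb{F}}}(I)|} \;\leq\; \dim_{\bar{\mathbb{F}}} \bar{\mathbb{F}}[X_1,\ldots,X_m]/\bar{I} \;=\; |\Delta_\prec(I)|.
\end{equation*}
The main obstacle is the surjectivity step, and more precisely the preliminary fact that $\mathbb{V}_{\bar{\mathbb{F}}}(I)$ really is finite; the rest of the argument is a clean dimension count. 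Stability of the footprint under the base change $\mathbb{F}\subseteq \bar{\mathbb{F}}$ is a minor point but worth stating explicitly, since Theorem~\ref{thebasis} is applied on the $\bar{\mathbb{F}}$ side whereas the hypothesis on $I$ is formulated over $\mathbb{F}$.
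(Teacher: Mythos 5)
Your proof is correct. The paper states the result as a corollary of Theorem~\ref{thebasis} and defers to the cited references rather than spelling out the argument; your derivation --- extending scalars to $\bar{\mathbb{F}}$, noting the footprint is stable under base change, and showing the evaluation map onto $\bar{\mathbb{F}}^{|\mathbb{V}_{\bar{\mathbb{F}}}(I)|}$ is surjective via separating products of linear factors --- is exactly the standard way to make that corollary explicit, so it matches the route the paper indicates.
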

Consider the Hermitian polynomial $X^{q+1}-Y^q-Y$ and the
corresponding ideal 
$$I=\langle X^{q+1}-Y^q-Y\rangle \subseteq {\mathbb{F}}_{q^2}[X,Y].$$
Define a monomial function $w$ by $w(X)=q$ and $w(Y)=(q+1)$ and
consider the weighted degree monomial ordering $\prec_w$ given
by $X^{\alpha_1}Y^{\beta_1} \prec_w X^{\alpha_2}Y^{\beta_2}$ if one of
the following two conditions holds:
\begin{enumerate}
\item $w(X^{\alpha_1} Y^{\beta_1} )<w(X^{\alpha_2}Y^{\beta_2}) $.
\item $w(X^{\alpha_1} Y^{\beta_1} )=w(X^{\alpha_2}Y^{\beta_2}) $ 
but
  $\beta_1 < \beta_2$.
\end{enumerate}
Observe for later use that no two different monomials in
$$\Delta_{\prec_w}(I)=\{X^iY^j \mid 0 \leq i {\mbox{ \ and \ }} 0\leq
j<q\}$$
are of the same weight implying that $w: \Delta_{\prec_w}(I)
\rightarrow \langle q, q+1\rangle$ is a bijection. Observe also that
the Hermitian polynomial $X^{q+1}-Y^q-Y$ contains exactly two
monomials of highest weight. The implication of this is that 
$$w({\mbox{lm}}(F(X,Y))=w({\mbox{lm}}(F(X,Y) {\mbox{ rem }}\{X^{q+1}-Y^q-Y\})$$
holds for any polynomial  $F(X,Y)$ that possesses exactly one monomial of
highest weight 
in its support.\\
Consider next the ideal
$$I_{q^2}:=\langle X^{q^2}-X, Y^{q^2}-Y\rangle +I.$$
The variety
${\mathbb{V}}_{\mathbb{F}_{q^2}}(I)={\mathbb{V}}_{\mathbb{F}_{q^2}}(I_{q^2})$
consists of $n=q^3$ different points $\{P_1, \ldots  P_n\}$. The set
$\{X^{q^2}-X,X^{q+1}-Y^q-Y\}$ constitutes a Gr\"{o}bner basis for
$I_{q^2}$ with respect to $\prec_w$ and therefore
$$\Delta_{\prec_w}(I_{q^2})=\{ X^iY^j \mid 0\leq i<q^2, 0\leq j<q\}$$
holds. It now follows from Theorem~\ref{thebasis} that
$$\{ X^iY^j+I_{q^2} \mid 0\leq i<q^2, 0\leq j<q\}$$
is a basis for ${\mathbb{F}}_{q^2}[X,Y]/I_{q^2}$ as a vector space
over ${\mathbb{F}}_{q^2}$. The code construction relies on the
bijective evaluation map ${\mbox{ev}}: {\mathbb{F}}_{q^2}[X,Y]/I_{q^2}
\rightarrow {\mathbb{F}}_{q^2}^{n}$ given by
${\mbox{ev}}(F(X,Y)+I_{q^2})=(F(P_1), \ldots ,
F(P_n))$. Theorem~\ref{thefootprint} tells us that we can estimate the
Hamming weight of a word $\vec{c}={\mbox{ev}}(F(X,Y)+I_{q^2})$ by
$$w_H(\vec{c})\geq n-| \Delta_{\prec_w}(\langle F(X,Y)\rangle
+I_{q^2})|.$$
Without loss of generality we can assume ${\mbox{Supp}}(F) \subseteq
\Delta_{\prec_w}(I_{q^2})$. From the discussion prior to the
definition of $I_{q^2}$ we conclude that
no two different monomials in $F(X,Y)$ are of the same weight. As a
consequence
$$w({\mbox{lm}}(X^\alpha Y^\beta F(X,Y))=w({\mbox{lm}}(X^\alpha
Y^\beta F(X,Y) {\mbox{ rem }}\{X^{q+1}-Y^q-Y\})$$
holds for all $X^\alpha Y^\beta$. Write
$\Lambda=w(\Delta_{\prec_w}(I))=\langle q,q+1\rangle$, 
$\Lambda^\ast=w(\Delta_{\prec_w}(I_{q^2}))\subseteq \Lambda$ and
$\lambda=w({\mbox{lm}}(F)) \in \Lambda^\ast$.
We have
\begin{eqnarray}
| \Delta_{\prec_w}(\langle F(X,Y)\rangle +I_{q^2})|\leq |( \Lambda^\ast
-(\lambda +\Lambda))| \leq |  (\Lambda \backslash (\lambda +\Lambda)|=\lambda, \nonumber
\end{eqnarray}
where the last equality comes from~\cite[Lem.\ 5.15]{handbook}. Hence, $w_H(\vec{c}) \geq n-\lambda$ holds. Observe
that 
\begin{equation}
\Lambda^\ast = \{ \lambda_1, \ldots , \lambda_g\} \cup \{2g, \ldots , n-1\} \cup
\{ \lambda_{n-g+1},\ldots ,\lambda_n\}, \label{eqcharac}
\end{equation}
where $\lambda_i \leq g-1+i$ for $i=1, \ldots , g$. This is a general
result for Weierstrass semigroups and not particular for the
Hermitian function field. Having described the Hermitian codes as affine
variety codes we are now ready to introduce the combination of codes
on which our construction of small-bias spaces rely. Consider the ideal
$$I^{(2)}_{q^2}:=\langle X_1^{q+1}-Y_1^q-Y_1, X_2^{q+1}-Y_2^q-Y_2,X_1^{q^2}-X_1,Y_1^{q^2}-Y_1,X_2^{q^2}-X_2,Y_2^{q^2}-Y_2 \rangle$$
and the corresponding variety 
$${\mathbb{V}}_{\mathbb{F}_{q^2}}(I^{(2)}_{q^2})={\mathbb{V}}_{\mathbb{F}_{q^2}}(I_{q^2})
  \times {\mathbb{V}}_{\mathbb{F}_{q^2}}(I_{q^2})=\{Q_1, \ldots , Q_{q^6}\}.$$
Define a monomial function  $w^{(2)}$ given by
$w^{(2)}(X_1)=(q,0)$,
$w^{(2)}(Y_1)=(q+1,0)$,$w^{(2)}(X_2)=(0,q)$, and finally
$w^{(2)}(Y_2)=(0,q+1)$. Let $\prec_{\mathbb{N}_0^2}$ be any monomial
ordering on ${\mathbb{N}}_0^2$ and define $\prec_{w^{(2)}}$ by
$$X_1^{\alpha_1^{(1)}}Y_1^{\beta_1^{(1)}}X_2^{\alpha_1^{(2)}}Y_2^{\beta_1^{(2)}}
\prec_w^{(2)}
X_1^{\alpha_2^{(1)}}Y_1^{\beta_2^{(1)}}X_2^{\alpha_2^{(2)}}Y_2^{\beta_2^{(2)}}$$
if one of the following two conditions holds:
\begin{enumerate}
\item $w^{(2)}(X_1^{\alpha_1^{(1)}}Y_1^{\beta_1^{(1)}}X_2^{\alpha_1^{(2)}}Y_2^{\beta_1^{(2)}})
\prec_{\mathbb{N}_0^2}
w^{(2)}(X_1^{\alpha_2^{(1)}}Y_1^{\beta_2^{(1)}}X_2^{\alpha_2^{(2)}}Y_2^{\beta_2^{(2)}})$
\item $w^{(2)}(X_1^{\alpha_1^{(1)}}Y_1^{\beta_1^{(1)}}X_2^{\alpha_1^{(2)}}Y_2^{\beta_1^{(2)}})
=
w^{(2)}(X_1^{\alpha_2^{(1)}}Y_1^{\beta_2^{(1)}}X_2^{\alpha_2^{(2)}}Y_2^{\beta_2^{(2)}})$\\
but \\
$X_1^{\alpha_1^{(1)}}Y_1^{\beta_1^{(1)}}X_2^{\alpha_1^{(2)}}Y_2^{\beta_1^{(2)}}
\prec_{{\mbox{lex}}}
X_1^{\alpha_2^{(1)}}Y_1^{\beta_2^{(1)}}X_2^{\alpha_2^{(2)}}Y_2^{\beta_2^{(2)}}$.
\end{enumerate}
Here, $X_1\succ_{{\mbox{lex}}}Y_1 \succ_{{\mbox{lex}}} X_2\succ_{{\mbox{lex}}}Y_2$
is assumed.
The set $\{ X_1^{q+1}-Y_1^q-Y_1, X_2^{q+1}-Y_2^q-Y_2,X_1^{q^2}-X_1,
X_2^{q^2}-X_2\}$ is a Gr\"{o}bner basis for $I_{q^2}^{(2)}$ with
respect to $\prec_{w^{(2)}}$ giving us the basis 
$$\{
X_1^{i_1}Y_1^{j_1}X_2^{i_2}Y_2^{j_2}+I_{q^2} \mid 0\leq i_1, i_2 < q^2, 0 \leq
j_1,j_2 < q\}$$
for ${\mathbb{F}}_{q^2}[X_1,Y_1,X_2,Y_2]/I_{q^2}^{(2)}$ as a
vectorspace over ${\mathbb{F}}_{q^2}$. For the code construction we
need the following bijective evaluation map
$${\mbox{EV}}: {\mathbb{F}}_{q^2}[X_1,Y_1,X_2,Y_2]/I^{(2)} \rightarrow
{\mathbb{F}}_{q^2}^{q^6}$$
given by ${\mbox{EV}}(F(X_1,Y_1,X_2,Y_2)+I_{q^2}^{(2)})=(F(Q_1,) ,\ldots ,
F(Q_{q^6}))$.
Define $\Lambda^{(2)}=\Lambda \times \Lambda$ and $\big(\Lambda^{(2)}\big)^\ast=\Lambda^\ast \times \Lambda^\ast$. We have 
$$\big(
\Lambda^{(2)}\big)^\ast=
w^{(2)}(\Delta_{\prec_{w^{(2)}}}(I^{(2)}_{q^2}))$$
where no two monomials in $\Delta_{\prec_{w^{(2)}}}(I^{(2)}_{q^2})$ have
the same weight. Similar to the situation of a Hermitian code we
consider a codeword
$\vec{c}={\mbox{EV}}(F(X_1,Y_1,X_2,Y_2)+I_{q^2}^{(2)})$ where without
loss of generality we will assume that $F(X_1,Y_1,X_2,Y_2)\in
\Delta_{\prec_{w^{(2)}}}(I_{q^2}^{(2)})$. We
write $\lambda^{(2)}=(\lambda_1,\lambda_2)=w^{(2)}({\mbox{lm}}(F)).$
We can estimate
\begin{eqnarray}
| \Delta_{\prec_{w^{(2)}}}(\langle F(X_1,Y_1,X_2,Y_2)\rangle
+I_{q^2}^{(2)})| &\leq&| \Lambda^{(2)} -(\lambda^{(2)}+\Lambda^{(2)})|
\nonumber \\
&\leq &q^6-(q^3-\lambda_1)(q^3-\lambda_2).\nonumber
\end{eqnarray}
Hence, $w_H(\vec{c})\geq (q^3-\lambda_1)(q^3-\lambda_2)$.\\
Consider the code $\tilde{E}(\delta)$ which is to Hermitian codes what
Massey-Costello-Justesen codes \cite{masseycostellojustesen} are to Reed-Solomon codes
\begin{eqnarray}
\tilde{E}(\delta):=&&{\mbox{Span}}_{{\mathbb{F}}_{q^2}} \bigg\{
    {\mbox{EV}}(X_1^{i_1}Y_1^{j_1}X_2^{i_2}Y_2^{j_2}+I_{q^2}^{(2)}) \mid
    0  \leq i_1, i_2 < q^2, 0 \leq j_1, j_2 < q ,\nonumber \\
&&{\mbox{ \hspace{2cm}}} (q^3-w(X_1^{i_1}Y_1^{j_1}))(q^3-w(X_2^{i_2}Y_2^{j_2})) \geq \delta\bigg\}.\nonumber
\end{eqnarray}
From our discussion we conclude that the minimum distance satisfies
$d(\tilde{E}(\delta))\geq \delta$. To estimate the dimension we make
use of the characterization~(\ref{eqcharac}). The task is to estimate
the number of $(\lambda_1,\lambda_2)$s that satisfies
$(q^3-\lambda_1)(q^3-\lambda_2)\geq \delta$. For this purpose we can
replace $\Lambda^\ast$ with
$$\{g, g+1, \ldots, q^3-1\}\cup \{\lambda_{n-g+1}, \ldots , \lambda_n\}.$$
When estimating the dimension $k(\tilde{E}(\delta))$ we shall
furthermore ignore the elements in $\{\lambda_{n-g+1}, \ldots
,\lambda_n\}$. Writing $T=q^3-g$ we thereby get
\begin{eqnarray}
k(\tilde{E}(\delta))&\geq&| \{(i,j) \mid 0\leq i,j \leq T-1,
(T-i)(T-j) \geq \delta\}|\nonumber \\
&\geq&\int_{0}^{T-\frac{\delta}{T}}
\int_0^{T-\frac{\delta}{T-i}}djdi=T^2-\delta+\d {\mbox{ln}} \left(
  \frac{\delta}{T^2}\right), \nonumber 
\end{eqnarray}
where the last inequality holds under the assumption $\delta \geq T$.
\begin{proposition}\label{propar}
Assume $\delta \geq T$ where $T=q^3-g$. The parameters of
$\tilde{E}(\delta)$ are $[n=q^6,k\geq T^2-\delta+\delta \ln
(\delta/T^2),d \geq \delta]$.
\end{proposition}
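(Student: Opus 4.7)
The plan is to verify the three parameters in turn; each follows from the footprint and weight machinery built up in the paragraphs just before the statement. The length $n=q^6$ is immediate since $\mathrm{EV}$ takes values in $\mathbb{F}_{q^2}^{q^6}$.

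For the minimum distance, I would take an arbitrary nonzero codeword $\vec{c}\in \tilde{E}(\delta)$ and write it as $\vec{c}=\mathrm{EV}(F+I_{q^2}^{(2)})$, where $F$ is a nonzero $\mathbb{F}_{q^2}$-linear combination of the monomials $X_1^{i_1}Y_1^{j_1}X_2^{i_2}Y_2^{j_2}$ in the spanning set defining $\tilde{E}(\delta)$. Because $w^{(2)}$ is injective on $\Delta_{\prec_{w^{(2)}}}(I_{q^2}^{(2)})$ (as noted earlier), the leading monomial $\mathrm{lm}(F)$ is itself one of these spanning monomials, and so its weight $(\lambda_1,\lambda_2)=w^{(2)}(\mathrm{lm}(F))$ satisfies the defining inequality $(q^3-\lambda_1)(q^3-\lambda_2)\geq \delta$. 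Invoking the weight bound $w_H(\vec{c})\geq (q^3-\lambda_1)(q^3-\lambda_2)$ derived just before the statement then yields $w_H(\vec{c})\geq \delta$.

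The main work is the dimension bound. Since $\mathrm{EV}$ is a bijection on the quotient, distinct spanning monomials yield linearly independent codewords, so $k(\tilde{E}(\delta))$ equals the number of pairs $(\lambda_1,\lambda_2)\in (\Lambda^\ast)^2$ with $(q^3-\lambda_1)(q^3-\lambda_2)\geq \delta$. Pairs with $\lambda_i\geq q^3$ contribute nothing, since then $q^3-\lambda_i\leq 0$. For the remaining elements I would invoke the characterization~(\ref{eqcharac}): writing the $T=q^3-g$ elements of $\Lambda^\ast\cap\{0,\ldots,q^3-1\}$ in increasing order, the bound $\lambda_i\leq g-1+i$ shows they are termwise at most $g,g+1,\ldots,q^3-1$. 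Replacing $\Lambda^\ast\cap\{0,\ldots,q^3-1\}$ by this majorant only decreases the pair count, so
\begin{equation*}
k(\tilde{E}(\delta))\geq \#\{(i,j)\in\{g,\ldots,q^3-1\}^2 \,:\, (q^3-i)(q^3-j)\geq \delta\}.
\end{equation*}
A change of variables $u=q^3-i$, $v=q^3-j$ reduces this to counting $(u,v)\in\{1,\ldots,T\}^2$ with $uv\geq \delta$. This lattice-point count is bounded below by the continuous area integral displayed immediately before the statement; the assumption $\delta\geq T$ is what keeps the integration limits nonnegative. Evaluating that integral yields the claimed $T^2-\delta+\delta\ln(\delta/T^2)$.

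I expect the main obstacle to be the majorant argument: carefully justifying that replacing the true nongap values by $\{g,\ldots,q^3-1\}$ gives a lower bound on the pair count, and then that the lattice-point count dominates the displayed area integral. Both hinge on monotonicity of the relevant region, so neither is deep, but they are the places where sign conventions and off-by-one issues most easily go wrong. Once those are in place the remaining steps are direct calculation.
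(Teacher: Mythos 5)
Your proposal reproduces the paper's proof essentially line for line: the length is read off from the codomain of $\mathrm{EV}$, the distance bound follows from the footprint estimate combined with the injectivity of $w^{(2)}$ on $\Delta_{\prec_{w^{(2)}}}(I_{q^2}^{(2)})$, and the dimension bound uses the same chain of replacing the nongaps below $2g$ by the termwise majorant $\{g,\dots,q^3-1\}$, dropping the nongaps $\geq q^3$, and then comparing the lattice count against the displayed double integral. Two small side remarks are inaccurate, though neither breaks the argument: (i) the claim that pairs with $\lambda_i\geq q^3$ ``contribute nothing, since then $q^3-\lambda_i\leq 0$'' fails when \emph{both} coordinates are $\geq q^3$, for then $(q^3-\lambda_1)(q^3-\lambda_2)\geq 0$ and can indeed be $\geq\delta$; the paper merely \emph{ignores} these pairs, and dropping pairs is a legitimate lower-bounding step irrespective of whether they contribute, so the conclusion survives. (ii) The hypothesis $\delta\geq T$ is not what keeps the integration limits nonnegative (that would be $\delta\leq T^2$); its actual role is to guarantee $T-\delta/(T-i)\leq T-1$ for all $i\geq 0$, so that the number of admissible $j\in\{0,\dots,T-1\}$ is exactly $\lfloor T-\delta/(T-i)\rfloor+1$ and hence dominates the inner integral. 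With these two points adjusted your argument and the paper's coincide.
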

In~\cite{hyperbolictype} Feng-Rao improved codes $\tilde{C}(\delta)$
over ${\mathbb{F}}_{q^2}[X_1, Y_1,X_2,Y_2]/I_{q^2}^{(2)}$ were
considered and a formula similar to the above proposition was derived
under a stronger assumption on $\delta$. Feng-Rao improved codes are
described by means of their parity check matrix which is not very
useful when the aim is to construct a small-bias space. This is why we
included the description of $\tilde{E}(\delta)$ in the present
paper. We have a proof that $\tilde{E}(\delta)=\tilde{C}(\delta)$,
however, 
we do not include it here as it has no implication for the
construction of small-bias spaces. Observe that to derive
Proposition~\ref{propar} we did  not use detailed information about
the Weierstrass semigroup $\Lambda$ but relied only on the
genus and the number of roots of the Hermitian polynomial. Proposition~\ref{propar} can be
generalized to hold for not only two copies of Hermitian function fields but to
arbitrary many such copies. Such constructions, however, are not
useful when dealing with small-bias spaces so we do not treat them
here.\\
From Proposition~\ref{propar} and Theorem~\ref{theconc} we get a new
class of $\epsilon$-bias spaces:
\begin{theorem}\label{theC}
For any $\epsilon$, $0<\epsilon <1$ using codes $\tilde{E}(\delta)$ as
outer code in the construction of Theorem~\ref{theconc} one can
construct $\epsilon$-bias spaces with
\begin{equation}
{\mathcal{X}}\subseteq {\mathbb{F}}_2^{\Omega(k)},
{\mbox{ \ \ \ }}| {\mathcal{X}}|=\bigO{\left(\frac{k}{\epsilon+(1-\epsilon)\ln
      (1-\epsilon)}\right)^{\frac{4}{3}}}.\label{eqhurra}
\end{equation}
\end{theorem}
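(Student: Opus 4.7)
The plan is to feed the parameters of $\tilde{E}(\delta)$ from Proposition~\ref{propar} into the concatenation of Theorem~\ref{theconc} and then solve for the relationship between $|\mathcal{X}|$, $k$ and $\epsilon$. Fix a target bias $\epsilon\in(0,1)$ and let $q$ (a power of two) tend to infinity; set $\delta=\lceil q^6(1-\epsilon)\rceil$, which satisfies $\delta\geq T$ for all sufficiently large $q$ since $T=q^3-q(q-1)/2\sim q^3$. Proposition~\ref{propar} then supplies an outer code with $N=q^6$, $D\geq\delta$ and $K\geq T^2-\delta+\delta\ln(\delta/T^2)$. Applying Theorem~\ref{theconc} with inner Walsh--Hadamard code $C_s$, $s=2\log_2 q$, yields an $\epsilon'$-balanced binary code with $\epsilon'=(N-D)/N\leq 1-\delta/q^6\leq\epsilon$, length $n=q^8$ and dimension $k=2(\log_2 q)\,K$. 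By Theorem~\ref{thedirect} the columns form an $\epsilon$-bias multiset $\mathcal{X}\subseteq\mathbb{F}_2^k$ of size $|\mathcal{X}|=q^8$, and the condition $\mathcal{X}\subseteq\mathbb{F}_2^{\Omega(k)}$ is automatic since $n\geq k$.

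To finish, abbreviate $\phi(\epsilon)=\epsilon+(1-\epsilon)\ln(1-\epsilon)$. From $T^2=q^6\bigl(1-1/q+\mathcal{O}(q^{-2})\bigr)$ one gets $\delta/T^2=(1-\epsilon)\bigl(1+\mathcal{O}(1/q)\bigr)$, so a short Taylor expansion gives
$$T^2-\delta+\delta\ln(\delta/T^2)=q^6\phi(\epsilon)+\mathcal{O}(q^5).$$
For $q$ sufficiently large (depending on $\epsilon$) the right side is at least $\tfrac12 q^6\phi(\epsilon)$; hence $k\geq (\log_2 q)\,q^6\phi(\epsilon)$, and since $\log_2 q\geq 1$ we conclude $q^6=\mathcal{O}(k/\phi(\epsilon))$. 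Raising to the $4/3$ power delivers
$|\mathcal{X}|=q^8=(q^6)^{4/3}=\mathcal{O}\bigl((k/\phi(\epsilon))^{4/3}\bigr)$, which is exactly~(\ref{eqhurra}).

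The only delicate point is uniformity in $\epsilon$: the $\mathcal{O}(q^5)$ correction is dominated by $q^6\phi(\epsilon)$ only when $q\gg 1/\phi(\epsilon)$, and since $\phi(\epsilon)\sim\epsilon^2/2$ as $\epsilon\to 0$, this threshold blows up for small $\epsilon$. This is bookkeeping rather than a genuine obstruction: the paper's bigO convention permits the witness threshold in $q$ (equivalently in $k$) to depend on the admissible interval of the second variable $\epsilon$, so one simply takes $q$ large enough as a function of $\epsilon$ to make the uniform estimate hold.
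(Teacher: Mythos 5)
Your proposal follows essentially the same route as the paper: set $\delta\approx N(1-\epsilon)$ so that the outer code is $\epsilon$-balanced, apply Proposition~\ref{propar} and Theorem~\ref{theconc}, Taylor-expand $T^2/q^6$ and $\ln(\delta/T^2)$ in $1/q$ to obtain $K/N\geq\phi(\epsilon)+o(1)$ with $\phi(\epsilon)=\epsilon+(1-\epsilon)\ln(1-\epsilon)$, and then solve $|\mathcal{X}|=q^8=(q^6)^{4/3}$ against $k\gtrsim q^6\phi(\epsilon)$. The only cosmetic differences are that you absorb the $2^s/s$ factor via $\log_2 q\geq 1$ while the paper writes $2^s=|\mathcal{X}|^{1/4}$ and solves for $|\mathcal{X}|^{3/4}$, that you take a ceiling on $\delta$ (correctly, since the paper silently treats $\delta=N(1-\epsilon)$ as an integer), and that you make explicit the non-uniformity in $\epsilon$ that the paper buries in its $o(1)$; note, though, that the paper's convention fixes a single threshold $\kappa$ and lets the admissible interval $I(q)$ of $\epsilon$ grow with $q$, rather than letting $\kappa$ depend on $\epsilon$ as your phrasing suggests, and the condition $\mathcal{X}\subseteq\mathbb{F}_2^{\Omega(k)}$ holds simply because the columns live in $\mathbb{F}_2^k$, not because $n\geq k$.
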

\begin{proof}
In the following we will use the substitution $1-\epsilon=\delta/N$
which follows from $\epsilon=(N-\delta)/N$. Assume $\delta >
\sqrt{N}$. We then have $\delta > T$ which is the condition in
Proposition~\ref{propar}. Note that $\delta > \sqrt{N}$ is equivalent
to $\epsilon < 1-(1/\sqrt{N})$. For $N \rightarrow \infty$ this
becomes $\epsilon < 1$ which is actually no restriction at all. From
the proposition we get
\begin{eqnarray}
\frac{K}{N}&\geq&\left(\frac{q^3-g}{q^6}\right)^2-\frac{\delta}{q^6}+\frac{\delta}{q^6}\ln
\left( \frac{\delta}{(q^3-g)^2}\right) \nonumber \\
&\geq&o(1)+1-(1-\epsilon)+(1-\epsilon)\ln (1-\epsilon)\nonumber \\
&=&o(1)+\epsilon+(1-\epsilon) \ln (1-\epsilon).\nonumber
\end{eqnarray}
With $q^2=2^s$ we have
$$|{\mathcal{X}}| \leq \frac{2^s}{s}\left( \frac{k}{o(1)+\epsilon+(1-\epsilon)\ln
    (1-\epsilon)}\right).$$
But $|{\mathcal{X}}|=(2^s)^4$ implies $2^s=| {\mathcal{X}}|^{1/4}$ and (\ref{eqhurra}) has been demonstrated.
\end{proof}

\begin{theorem}
Consider the family of $\epsilon$-bias spaces in
Theorem~\ref{theC}. Given $\alpha \in {\mathbb{R}}^+$ choose
$\epsilon=k^{-\alpha}$ and let $k \rightarrow \infty$. We have
\begin{equation}
\log_{k}(|{\mathcal{X}}|)=
\frac{4}{3}+\frac{8}{3} \alpha+o(1).\label{eqhiphip}
\end{equation}
\end{theorem}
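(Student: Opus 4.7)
\medskip
\noindent\textbf{Proof plan.} The plan is to substitute $\epsilon=k^{-\alpha}$ into the bound from Theorem~\ref{theC} and show that the denominator $\epsilon+(1-\epsilon)\ln(1-\epsilon)$ behaves asymptotically like $\epsilon^2/2$ as $k\to\infty$ (equivalently, as $\epsilon\to 0$). Once this is done, taking $\log_k$ on both sides of the bound is a direct calculation.

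\medskip
\noindent First I would note that for $\alpha\in{\mathbb R}^+$ fixed and $k\to\infty$, the hypothesis $\epsilon=k^{-\alpha}$ ensures $\epsilon\to 0$, so the Taylor expansion of $\ln(1-\epsilon)$ is valid. Expanding,
\begin{equation*}
\ln(1-\epsilon)=-\epsilon-\frac{\epsilon^2}{2}-\frac{\epsilon^3}{3}-\cdots,
\end{equation*}
so multiplying by $(1-\epsilon)$ and adding $\epsilon$ yields
\begin{equation*}
\epsilon+(1-\epsilon)\ln(1-\epsilon)=\frac{\epsilon^2}{2}+\frac{\epsilon^3}{6}+O(\epsilon^4)=\frac{\epsilon^2}{2}\bigl(1+o(1)\bigr).
\end{equation*}

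\medskip
\noindent Plugging this into~(\ref{eqhurra}) of Theorem~\ref{theC} gives
\begin{equation*}
|{\mathcal X}|=\bigO{\left(\frac{2k}{\epsilon^2(1+o(1))}\right)^{4/3}}=\bigO{\left(\frac{k}{\epsilon^2}\right)^{4/3}}.
\end{equation*}
Taking $\log_k$ and using $\epsilon=k^{-\alpha}$, we obtain
\begin{equation*}
\log_k|{\mathcal X}|\le\frac{4}{3}\bigl(\log_k k-\log_k(\epsilon^2)\bigr)+o(1)=\frac{4}{3}+\frac{8}{3}\alpha+o(1),
\end{equation*}
where the constant factor (the $2^{4/3}$ coming from $(2k/\epsilon^2)^{4/3}$ and any hidden constant from the big-O) contributes only $o(1)$ after the $\log_k$, since $\log_k c\to 0$ for any fixed positive constant $c$ as $k\to\infty$.

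\medskip
\noindent There is no real obstacle here; the only mild point that deserves care is checking that the restriction $\epsilon<1-1/\sqrt{N}$ used inside the proof of Theorem~\ref{theC} is eventually satisfied. Since $\epsilon=k^{-\alpha}\to 0$ while $N\to\infty$, for every fixed $\alpha>0$ the condition holds for all sufficiently large $k$, so it imposes no restriction on the asymptotic statement~(\ref{eqhiphip}).
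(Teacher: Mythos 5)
Your proposal is correct and follows essentially the same route as the paper: both substitute $\epsilon = k^{-\alpha}$ into the bound of Theorem~\ref{theC}, Taylor-expand $\ln(1-\epsilon)$ to show that the denominator $\epsilon+(1-\epsilon)\ln(1-\epsilon)$ behaves like $\epsilon^2/2$ (the paper uses the Lagrange remainder form, you use the power series, but this is cosmetic), and then take $\log_k$, absorbing fixed multiplicative constants into $o(1)$. Your remark that the restriction $\epsilon < 1-1/\sqrt{N}$ is eventually vacuous is a welcome extra check, though the paper already dispatches it inside the proof of Theorem~\ref{theC}.
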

\begin{proof}
We have 
$$\log_k(|{\mathcal{X}}|) \leq
\frac{4}{3}-\frac{4}{3}\log_k(\epsilon+(1-\epsilon)\ln
(1-\epsilon)).$$
We now apply Taylors formula to derive 
$\ln (1-\epsilon) =-\epsilon-{\epsilon^2}/{2(1-c)^2}$ 
for some $c \in [0,\epsilon]$. This produces
\begin{eqnarray}
\log_k(|{\mathcal{X}}|)&\leq &\frac{4}{3}
-\frac{4}{3}\log_k\left(\epsilon+(1-\epsilon)(-\epsilon-\frac{\epsilon^2}{2(1-\epsilon)^2})\right)\nonumber
\\
&\leq&\frac{4}{3}-\frac{4}{3} \log_k \left(\epsilon^2\left(\frac{2(1-\epsilon)^2-\epsilon^2}{(1-\epsilon)^2}\right)\right).\nonumber
\end{eqnarray}
With $\epsilon=k^{-\alpha}$ we arrive at~(\ref{eqhiphip}).
\end{proof}

\section{Time complexity considerations}\label{secfast}
To build the multiset ${\mathcal{X}}$ in our construction we need to
construct a generator matrix for the concatenated code. This involves the
following tasks:
\begin{enumerate}
\item Build the generator matrix $G_1$ for $\tilde{E}(\delta)$.
\item Express every entry of $G_1$ as a binary vector giving us $G_2$
  (a matrix with binary vectors as entries).
\item For every row in $G_2$ we produce $s=\log_2(q^2)$ rows. This
  is done by taking cyclic shifts of all the vectors appearing in the
  row. We arrive at a matrix $G_3$.
\item Every entry in $G_3$ is a vector of length $s$ and it must be
  multiplied with the $s \times 2^s$ generator matrix of the
  Walsh-Hadamard code producing $G_4$.
\end{enumerate}
The total cost in binary operations is estimated as follows:
\begin{enumerate}
\item Determining functions and points for the code construction is
  inexpensive. To produce one entry costs
  $\bigO{\log(N)\log(\log(N))}$ operations. $G_1$ is a $K\times N$
  matrix. Using $K\leq N-D+1$, $\epsilon=(N-D)/N$,
  $\epsilon=k^{-\alpha}$, and $k=K\log_2(N)/6$ we arrive at
$K \leq N^{\frac{1}{1+\alpha}}(\log_2(N))^{\frac{-\alpha}{1+\alpha}}6^{\frac{\alpha}{1+\alpha}}.$
So the price for building $G_1$ is
$\bigO{N^{\frac{2+\alpha}{1+\alpha}}(\log(N))^{\frac{1}{1+\alpha}}\log(\log(N))}$.
\item To produce one entry in $G_2$ costs
  $\bigO{N^{\frac{1}{3}}\log(N^{\frac{1}{3}})\log(\log(N^{\frac{1}{3}}))}$
  operations. That is, to produce $G_2$ from $G_1$ amounts to\\
  $\bigO{N^{\frac{7+4\alpha}{3+3\alpha}}\log(N)^{\frac{1}{1+\alpha}}\log(\log(N))}$
  operations.
\item There will be
  $\bigO{N^{\frac{2+\alpha}{1+\alpha}}(\log(N))^{\frac{1}{1+\alpha}}}$
  entries in $G_3$ each coming with a cost of $s$
  operations. Altogether we have
  $\bigO{N^{\frac{2+\alpha}{1+\alpha}}(\log(N))^{\frac{2+\alpha}{1+\alpha}}}$
  operations.
\item The price for multiplying with a generator matrix for the
  Walsh-Hadamard code is $N^{\frac{1}{3}}\log(N)$ giving a total cost
  of
\begin{equation}
  \bigO{N^{\frac{7+4\alpha}{3+3\alpha}}(\log(N))^{\frac{2+\alpha}{1+\alpha}}}\label{eqpris}
\end{equation}
  operations for producing $G_4$ from $G_3$.
\end{enumerate}
Clearly, the overall cost is that
of~(\ref{eqpris}). Note that~(\ref{eqpris}) counts binary operations
in contrast to~(\ref{eqfollows}) which counts operations in ${\mathbb{F}}_{q^2}$.

\section{Small-bias spaces from norm-trace codes}\label{normtrace}
The method developed by Ben-Aroya and Ta-Shma for Hermitian codes
in~\cite{hermitian} were generalized to norm-trace codes by Matthews
and Peachey in~\cite{MP}. Given $r \geq 2$ consider the $C_{ab}$ curve
\cite{cab} 
$$X^{\frac{q^r-1}{q-1}}-Y^{q^{r-1}}-Y^{q^{r-2}}-\cdots -Y^q-Y$$
known as the norm-trace curve over ${\mathbb{F}}_{q^r}$
\cite{normtrace}. Clearly, $r=2$ corresponds to the Hermitian
function field. The following theorem from~\cite{MP} coincides with~(\ref{eqBT}) when $l=4$.
\begin{theorem}\label{theA}
Given an integer $l$, $l\geq 4$, define $r=\lfloor
(l+2)/3\rfloor$. Let $k$ be a positive integer and $\epsilon$ a real
number, $0<\epsilon <1$ such that 
\begin{equation}
\frac{\epsilon}{\big( \log_v(1/\epsilon)\big)^{\frac{1}{\sqrt{l}}}} \leq
  k^{\frac{-1}{\sqrt{l}}} \label{eqtrekant}
\end{equation}
holds. Here, $v$ is any fixed real number larger than $1$. Using the
norm-trace function field over ${\mathbb{F}}_{q^r}$ one can construct an
$\epsilon$-bias space  ${\mathcal{X}}\subseteq
{\mathbb{F}}_2^{\Omega(k)}$ with
$$
|{\mathcal{X}}| =\bigO{  \left( \frac{k}{\epsilon^{l-\sqrt{l}}\log_v(1/\epsilon)}\right)^{\frac{l+1}{l}}}.$$ 
\end{theorem}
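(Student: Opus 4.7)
The plan is to follow the Ben-Aroya--Ta-Shma strategy from~\cite{hermitian}, now instantiated on the norm-trace curve over $\mathbb{F}_{q^r}$ rather than the Hermitian curve, and concatenated with a Walsh-Hadamard inner code (recall $q^r=2^s$ with $s=r\log_2 q$). The norm-trace curve is a $C_{ab}$-curve with $N=q^{2r-1}$ affine $\mathbb{F}_{q^r}$-rational places and genus $g=(a-1)(b-1)/2$ for $a=(q^r-1)/(q-1)$ and $b=q^{r-1}$. At the unique place $Q$ at infinity the Weierstrass semigroup is generated by $a$ and $b$, and the monomials $\{x^iy^j : 0\le j<b\}$ form a basis of the coordinate ring with all pole orders $ib+ja$ distinct.

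For a parameter $m<g$ I would form the one-point evaluation subcode
$$C=\mathrm{Span}_{\mathbb{F}_{q^r}}\{\mathrm{ev}(x^iy^j) : 0\le j<b,\ ib+ja\le m\}.$$
A footprint argument in the style of the one behind Proposition~\ref{propar} (in its single-curve form) yields the distance bound $D\ge N-m$, while counting lattice points in the defining simplex gives the dimension bound $K=\Omega(m^2/(ab))$. Feeding $C$ into Theorem~\ref{theconc} with the Walsh-Hadamard inner code produces an $\epsilon$-balanced binary code with $\epsilon=(N-D)/N\le m/N$, length $n=Nq^r=q^{3r-1}$ and dimension $k=Ks$, which Theorem~\ref{thedirect} converts into an $\epsilon$-bias space $\mathcal{X}\subseteq\mathbb{F}_2^{\Omega(k)}$ of size $|\mathcal{X}|=n$.

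The asymptotic bound then comes from parameter optimization. Substituting $m=\epsilon q^{2r-1}$ into the estimate for $K$ gives $k=\Theta(\epsilon^2 q^{2r} r\log_2 q)$; solving for $q$ and re-substituting into $|\mathcal{X}|=q^{3r-1}$ expresses $|\mathcal{X}|$ in $k$ and $\epsilon$ alone. The choice $r=\lfloor(l+2)/3\rfloor$ together with hypothesis~(\ref{eqtrekant}) then selects the admissible asymptotic regime: (\ref{eqtrekant}) is precisely what keeps the low-dimensional regime $m<g$ active, and it lets the $\log_2 q$ factor coming from the concatenation be absorbed into $\log_v(1/\epsilon)$.

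The main obstacle is matching the exact exponents in the conclusion. The naive lattice-point count and $q$-elimination sketched above yield an overall exponent of $(3r-1)/(2r)$, which reduces to $5/4$ when $r=2$ and so correctly recovers the Hermitian BT-bound of~\cite{hermitian} at $l=4$. For larger $l$, however, this analysis is too crude to produce the stated $(l+1)/l$ and $\epsilon^{l-\sqrt{l}}$ uniformly in $l$; one must sharpen both the dimension estimate and the optimization over $q$ and $m$ so that the resulting bound tracks $l$ rather than only $r$, and track lower-order terms in the relation between $\log_2 q$ and $\log_v(1/\epsilon)$. This refined bookkeeping, which is where the technical content of~\cite{MP} resides, is what I expect to consume most of the work.
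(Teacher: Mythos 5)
The paper you are reading does not actually prove Theorem~\ref{theA}; it states it verbatim as a result of Matthews and Peachey and cites it to~\cite{MP} (``The following theorem from~\cite{MP} coincides with~(\ref{eqBT}) when $l=4$''). So there is no ``paper's own proof'' to compare against, and your proposal is necessarily being judged on its internal merits.

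Taken on those merits, the proposal correctly identifies the architecture (norm-trace $C_{ab}$ curve over $\mathbb{F}_{q^r}$, low-weighted-degree one-point evaluation code in the $m<g$ regime, footprint bound for distance, lattice-point count for dimension, Walsh--Hadamard concatenation via Theorems~\ref{theconc} and~\ref{thedirect}). But you yourself flag the gap, and it is real: your $q$-elimination yields an exponent $(3r-1)/(2r)$ that depends on $r$ only, while the claimed exponent $(l+1)/l$ varies over $l=4,5,6$ even though all three map to $r=2$ under $r=\lfloor(l+2)/3\rfloor$. A single choice of ``$m = \epsilon\cdot q^{2r-1}$ with $m<g$'' therefore cannot see the $l$-dependence; the missing idea is an additional tunable parameter governing how deep into the $m<g$ regime one sits (equivalently, a one-parameter family of tradeoffs between $m/g$ and $q$ for fixed $r$), with $l$ indexing that tradeoff and condition~(\ref{eqtrekant}) being the compatibility constraint that makes the chosen regime admissible. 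Without pinning down that extra degree of freedom and re-running the optimization, one cannot recover the factor $\epsilon^{l-\sqrt{l}}$ or the exponent $(l+1)/l$, and your sketch stops exactly before the step where the content lives. The proposal also relies on the rough dimension estimate $K=\Omega(m^2/(ab))$ without tracking constants; since the exponent in the final bound is sensitive to precisely this counting, the estimate would have to be made tight (e.g., via the explicit semigroup description used in~\cite{MP} or via~(\ref{eqcharac})) before the rest of the argument could be carried through.
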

In the above theorem it is not completely clear how well the cases
$l \geq 5$ compete with the case $l=4$. Below we address this question
and also compare the small-bias spaces from Theorem~\ref{theA} with
those achieved by using the codes $\tilde{E}(\delta)$ as is done
in the present paper.\\
We first translate Theorem~\ref{theA} into the setting from
Section~\ref{secintro} where for increasing $k$ and fixed $\alpha$ we
consider a sequence of $\epsilon$-bias multisets with
$\epsilon=k^{-\alpha}$. Condition~(\ref{eqtrekant}) from
Theorem~\ref{theA} then translates into
$$k^{1-\alpha \sqrt{l}}\leq \alpha \log_v(k).$$
For fixed $v$, $\log_v(k)=\bigO{k^\beta}$ holds for any $\beta
>0$. Therefore we have
$$1-\alpha\sqrt{l}\leq \log_k(\alpha).$$
Letting $k \rightarrow \infty$ we get the condition 
$$\frac{1}{\sqrt{l}} \leq \alpha.$$
Theorem~\ref{theA} therefore guarantees that for any $\alpha \geq
1/\sqrt{l}$ we can construct an infinite sequence of $\epsilon$-bias
spaces with $\epsilon=k^{-\alpha}$, ${\mathcal{X}}\subseteq
{\mathbb{F}}_2^{\Omega(k)}$ such that
\begin{equation}
\log_k (| {\mathcal{X}}|)
= \frac{l+1}{l}(1+\alpha(l-\sqrt{l}))+o(1).\label{eqdiamond}
\end{equation}
Given an $\alpha$ and two integers $l_1, l_2 \geq 4$ with $\alpha \geq
1/\sqrt{l_i}$, $i=1,2$ it is clear from~(\ref{eqdiamond}) that the
best result is obtained by choosing the smallest $l_i$. So the
advantage of Theorem~\ref{theA} over~(\ref{eqBT}) boils down to
the fact that Theorem~\ref{theA} allows for any $\alpha$
provided that the $l$ is chosen accordingly while~(\ref{eqBT}) requires $\alpha \geq 1/2$. Recall from Section~\ref{secnew}
that using the code $\tilde{E}(\delta)$ in the construction of
Theorem~\ref{theconc} one achieves
\begin{equation}
\log_k (| {\mathcal{X}}| )=
\frac{4}{3}+\frac{8}{3} \alpha +o(1) \label{eqcircle}
\end{equation}
for any choice of $\alpha$. We now compare this result
with~(\ref{eqdiamond}) ignoring of course the $o(1)$ parts. For fixed $l$ (\ref{eqdiamond}) is a linear
expression in $\alpha$ which is smaller than the linear expression
from~(\ref{eqcircle}) when $\alpha=0$. We now show that for
$\alpha=1/\sqrt{l}$ (which is the smallest $\alpha$ allowed)
(\ref{eqdiamond}) is larger than (\ref{eqcircle}) when $l\geq 5$. It
follows that none of  the cases $l \geq 5$ can compete with the
construction of the present paper. To show that (\ref{eqdiamond}) is
larger than (\ref{eqcircle}) for $\alpha=1/\sqrt{l}$ we substitute $k=\sqrt{l}$ into
(\ref{eqdiamond})-(\ref{eqcircle}) to get
$$\frac{1}{k^2}(k^3-\frac{4}{3}k^2-\frac{5}{3}k).$$
The function $k^3-\frac{4}{3}k^2-\frac{5}{3}k$ is positive for $k$
belonging to the interval from $0$ to approximately $2.119$ and
negative for higher values of $k$. Therefore for all $l\geq 5$ indeed
(\ref{eqcircle}) is better than (\ref{eqdiamond}).

\section{Acknowledgments}
The present work was done while Ryutaroh Matsumoto was visiting Aalborg
University as a Velux Visiting Professor supported by the Villum
Foundation. The authors gratefully acknowledge this support. The authors also gratefully acknowledge the support from
the Danish National Research Foundation and the National Science
Foundation of China (Grant No.\ 11061130539) for the Danish-Chinese Center for Applications of Algebraic Geometry in Coding Theory and Cryptography.

\end{document}